\documentclass[12pt,leqno]{amsart}
\usepackage{amsmath, amssymb, amsthm, amsfonts}
\usepackage{bm}
\usepackage{mathtools}
\usepackage{enumerate}
\usepackage{enumitem}
\usepackage{mathrsfs}
\usepackage{hyperref}
\topmargin = 0.02cm
\textwidth = 17cm
\textheight = 24cm
\baselineskip=16pt
\setlength{\oddsidemargin}{-0.3 pt}
\setlength{\evensidemargin}{-0.3 pt}

\newtheorem{thm}{Theorem}[section]
\newtheorem{defi}{Definition}[section]

\newtheorem{cor}{Corollary}[section]

\newcommand{\be}{\begin{equation}}
\newcommand{\ee}{\end{equation}}
\numberwithin{equation}{section}
\newcommand{\bea}{\begin{eqnarray}}
\newcommand{\eea}{\end{eqnarray}}
\newcommand{\beb}{\begin{eqnarray*}}
\newcommand{\eeb}{\end{eqnarray*}}
\usepackage{amssymb,amsfonts,amsthm,setspace,indentfirst}
\usepackage[dvips]{graphics}
\usepackage{epsfig}
\begin{document}
\title{Rough convergence of sequences in a partial metric space}
\author{Amar Kumar banerjee$^{1}$ and Sukila khatun$^{2}$}
\address{$^{1}$Department of Mathematics, The University of Burdwan,
Golapbag, Burdwan-713104, West Bengal, India.} 
\address{$^{2}$Department of Mathematics, The University of Burdwan,
Golapbag, Burdwan-713104, West Bengal, India.}
\email{$^{1}$akbanerjee@math.buruniv.ac.in, akbanerjee1971@gmail.com}
\email{$^{2}$sukila610@gmail.com}
\begin{abstract}
In this paper we have studied the notion of rough convergence of sequences in a partial metric space. We have also investigated how far several relevant results on boundedness, rough limit sets etc. which are valid in a metric space are affected in a partial metric space. 
\end{abstract}
\subjclass[2020]{40A05, 40A99.}
\keywords{Rough convergence, partial metric spaces, rough limit sets.}
\maketitle
\section{\bf{Introduction}}
\noindent The idea of rough convergence of sequences was first introduced in a normed linear space by H. X. Phu\cite{PHU} in 2001 and then he discussed thoroughly on the properties of rough limit sets such as boundedness, closedness, convexity etc. In an infinite dimensional normed linear space, in another paper\cite{PHU1}, an extension work of \cite{PHU} were done. Later though many authors carried out the works of rough convergence \cite{AYTER2, PMROUGH1, PMROUGH2} in more generalized form. But the study of rough convergence in a metric space studied by S. Debnath and D. Rakhshit\cite{DR} and in a cone metric space studied by A. K. Banerjee and R. Mondal\cite{RMROUGH} are significant for our present works.\\
\indent In 1994, Matthews\cite{MATW} introduced the idea of partial metric spaces as a generalization of metric spaces using the notion of self-distance $d(x,x)$ which may not be zero where as in a metric space it is always zero. In our present work we discuss the idea rough convergence of sequences in a partial metric space. Also we have found out several properties of rough limit sets of sequences and other relevant properties in partial metric spaces.\\
We now recall some definitions and results which will be needed in the sequel.
\section{\bf{Preliminaries}}\label{preli}
\begin{defi}\cite{PHU}
Let $\{ x_{n} \}$ be a sequence in a normed linear space $(X, \left\| . \right\|)$,  and $r$ be a non negative real number. Then $\{ x_{n} \}$ is said to be rough convergent (or $r$-convergent) to $x$ of roughness degree $r$ if for any $\epsilon >0$, there exists a natural number $k$ such that $\left\|x_{n} - x \right\| < r +\epsilon$ for all $n \geq k$.
\end{defi}
\begin{defi}\cite{BMW}
A partial metric on a nonempty set $X$ is a function $p: X\times X \longrightarrow [0, \infty)$ such that for all $x,y,z \in X$:\\
$(p1)$ $0 \leq p(x,x) \leq p(x,y)$ (nonnegativity and small self-distances),\\
$(p2)$ $x=y \Longleftrightarrow p(x,x)=p(x,y)=p(y,y)$ (indistancy both implies equality),\\
$(p3)$  $p(x,y)= p(y,x)$ (symmetry),\\
$(p4)$  $p(x,y) \leq p(x,z) + p(z,y) - p(z,z)$ (triangularity).\\
A partial metric space is a pair $(X,p)$ such that $X$ is a nonempty set and $p$ is a partial metric on $X$.
\end{defi}
Examples of partial metric spaces and its properties have been thoroughly discussed in \cite{BMW}.\\
\begin{defi}\cite{BMW}
In a partial metric space  $(X,p)$, for $r>0$ and $x \in X$ we define the open and closed ball of radius $r$ and center $x$ respectively as follows  :\\ 
$B^{p}_{r}(x)=\{ y \in X : p(x,y)<p(x,x)+r  \}$ \\
$\overline{B^{p}_{r}}(x)=\{ y \in X : p(x,y) \leq p(x,x)+r  \}$ 
\end{defi}
\begin{defi}\cite{BMW}
In a partial metric space $(X,p)$, a subset $U$ of $X$ is said to be an open set if for every $x\in U$ there exists a $r>0$  such that $B^{p}_{r}(x) \subset U$.
\end{defi}
\begin{defi}\cite{BMW}
Let $(X,p)$ be a partial metric space. A subset $U$ of $X$ is said to be a bounded in $X$ if there exists a positive real number $M$ such that $sup$ $\{ p(x,y): x,y \in U\}< M$.
\end{defi}
\begin{defi}\cite{BMW}
Let $(X,p)$ be a partial metric space and $\{x_{n}\}$ be a sequence in $X$. Then $\{x_{n}\}$ is said to converge to $x \in X$ if and only if $lim_{n\to\infty}p(x_{n},x)=p(x,x)$; i.e. if for each $\epsilon > 0$ there exists $k \in \mathbb{N}$ such that 
$|p(x_{n},x)-p(x,x)|< \epsilon$ for all $n \geq k$.
\end{defi}
\begin{defi}\cite{BMW}
Let $(X,p)$ be a partial metric space and $\{x_{n}\}$ be a sequence in $X$. Then
 $\{x_{n}\}$ is called a Cauchy sequence if   $lim_{n,m\to\infty}p(x_{n},x_{m})$ exists and is finite; i.e. if there exists $a \geq 0$ such that for each $\epsilon > 0$ there exists positive integer $k$ such that $|p(x_{n},x_{m})-a|< \epsilon$, whenever $n,m>k$. 
\end{defi}
\begin{defi}\cite{BMW}
A partial metric space $(X,p)$ is said to be complete if every Cauchy sequence converges.
\end{defi}
\section{\bf{Rough convergence of sequences in a partial metric space}}
\indent Throughout $(X,p)$ or simply $X$ stands for an partial metric space, $\mathbb{R}$ for the set of all real numbers, $\mathbb{N}$ for the set of all natural numbers, sets are always subsets of $X$ unless otherwise stated.
\begin{defi}(cf \cite{PHU})
Let $(X,p)$ be a partial metric space. A sequence $\{ x_{n} \}$ in $X$ is said to be rough convergent(or $r$-convergent) to $x$ of roughness degree $r$ for some non-negative real number $r$ if for every  $\epsilon > 0$ there exists a natural number $k$ such that $| p(x_{n}, x)-p(x,x) |<r + \epsilon $ holds for all $n \geq k$.
\end{defi}
\indent Usually we denote it by $x_{n} \stackrel{r}{\longrightarrow} x$. It should be noted that when $r=0$ the rough convergence becomes the classical convergence of sequences in a partial metric space. If $\left\{x_{n} \right\}$ is $r$-convergent to $x$, then $x$ is said to be a $r$-limit point of $\left\{x_{n} \right\}$.  For some $r$ as defined above, the set of all $r$-limit points of a sequence $\left\{x_{n} \right\}$ is said to be the $r$-limit set of the sequence $\left\{x_{n} \right\}$ and we will denote it by $LIM^{r}x_{n}$. Therefore we can write $LIM^{r}x_{n}= \left\{x \in X : x_{n} \stackrel{r}{\longrightarrow} x\right\}$. We should note that $r$-limit point of a sequence $\left\{x_{n} \right\}$ may not be unique.\\
\newline \\
\indent The following example shows that a sequence which is rough convergent in a partial metric space may not be  convergent in that space.\\ 
\noindent \textbf{Example 3.1.} Let $X=\{0\}\cup\mathbb{N}$ and $p: X\times X \longrightarrow [0, \infty)$ be defined by

\begin{equation*}
    \ p(n,m)=\begin{cases}
    1+\frac{1}{n}+\frac{1}{m}, & \text{if $n,m \in \mathbb{N},n \neq m$}, \\
    1,                        & \text{if $n=m \in X$}, \\
1+\frac{1}{n},             & \text{if  $m=0,n \in \mathbb{N}$},  \\
1+\frac{1}{m},             & \text{if  $n=0,m \in \mathbb{N}$}.

    \end{cases}
\end{equation*}
Then $p(n,m)$ is a partial metric on $X$.\\
We consider the sequence $ \{z_{n}\}$= \{0,k,0,k,0,k,.......\} i.e.
\begin{equation*}
    \{z_{n}\}=\begin{cases}
0 & \text{ if $n$ is odd }\\
k & \text{ if $n$ is even}

               \end{cases}
\end{equation*}
Now, $lim_{n\to\infty}p(z_{2n+1},0)=1$ and
     $lim_{n\to\infty}p(z_{2n},0)=1+ \frac{1}{k}$.
So, $lim_{n\to\infty}p(z_n,0)$ dose not exist.
Again, $lim_{n\to\infty}p(z_{2n+1},k)=1+ \frac{1}{k}$ and
     $lim_{n\to\infty}p(z_{2n},k)=1$.\\
So, $lim_{n\to\infty}p(z_n,k)$ dose not exist.\\
Let $m \in \mathbb{N}$, $m \neq 0,k$. Then \\
 $lim_{n\to\infty}p(z_{2n+1}),m)=1+ \frac{1}{m}$ and
     $lim_{n\to\infty}p(z_{2n},m)=1+\frac{1}{k}+\frac{1}{m}$.\\
So, $lim_{n\to\infty}p(z_n,m)$ dose not exist.\\
Hence $\{z_{n}\}$ is not convergent to any number of $X$ in $(X,p)$. \\
Now, let $\epsilon>0$ be arbitary. Then
\begin{equation*}
    \ |p(z_{n},0)-p(0,0)|=\begin{cases}
|1-1|=0 < \frac{1}{k}+\epsilon & \text{ if $n$ is odd }\\
|(1+\frac{1}{k})-1|=\frac{1}{k} < \frac{1}{k}+\epsilon & \text{ if $n$ is even}.
               \end{cases}
\end{equation*}
And 
\begin{equation*}
    \ |p(z_{n},k)-p(k,k)|=\begin{cases}
|(1+\frac{1}{k})-1|=\frac{1}{k} < \frac{1}{k}+\epsilon & \text{ if $n$ is odd}\\
|1-1|=0 < \frac{1}{k}+\epsilon & \text{ if $n$ is even}.
               \end{cases}
\end{equation*}

Let $m\in X$ and $m \neq 0,k$. Then \\
\begin{equation*}
    \ |p(z_{n},m)-p(m,m)|=\begin{cases}
|1+\frac{1}{m}-1|=\frac{1}{m} < (\frac{1}{k}+\frac{1}{m})+\epsilon & \text{ if $n$ is odd}\\
|1+\frac{1}{k}+\frac{1}{m}-1|=\frac{1}{k}+\frac{1}{m} < (\frac{1}{k}+\frac{1}{m})  +\epsilon & \text{ if $n$ is even}.
               \end{cases}
\end{equation*}
Hence $\{z_{n}\}$ is rough convergent to $0$ and $k$ with roughness degree $r=\frac{1}{k}$ and rough converges to any number $m(\neq 0,k)$ of roughness degree $(\frac{1}{k}+\frac{1}{m})$.\\
\indent We will use the similar kind of concept as was in the case of a metric space for diameter of a set in a partial metric space. For a subset $A$ of $X$ the diameter of $A$ is defined by\\ $Diam(A)$ = $sup$ $\{ p(x,y) : x, y\in A\}$.\\ $A$ is said to be of infinite diameter in the case when supremum is not finite.\\
\indent It has been discussed by Phu\cite{PHU} in a normed linear space that the diameter of a $r$-limit set of a $r$-convergent sequence $\{ x_{n} \}$ of roughness degree $r$ in a normed linear space $X$ is not greater then $2r$. Here we have found out a similar kind of property in a partial metric space as follows.
\begin{thm}
Let $(X,p)$ be a partial metric space and $a$ be a positive real number such that $p(x,x)=a$ for all $x$ in $X$. Then the  diameter of a $r$- limit set of a $r$-convergent sequence $\{x_{n}\}$ in $X$ is not greater then $(2r+2a)$.
\end{thm}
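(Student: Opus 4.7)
The plan is to mimic the classical metric-space argument: pick two arbitrary points $y, z \in LIM^{r} x_{n}$, use the definition of rough convergence to obtain one-sided upper bounds on $p(x_{n}, y)$ and $p(x_{n}, z)$ in terms of $r$, $a$ and $\epsilon$, and then chain these through the partial-metric triangle inequality $(p4)$ to control $p(y, z)$.

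Concretely, I would fix $y, z \in LIM^{r} x_{n}$ and an arbitrary $\epsilon > 0$. By the definition of $r$-convergence, there exist $k_{1}, k_{2} \in \mathbb{N}$ such that $|p(x_{n}, y) - p(y, y)| < r + \epsilon/2$ for all $n \geq k_{1}$ and $|p(x_{n}, z) - p(z, z)| < r + \epsilon/2$ for all $n \geq k_{2}$. Using the hypothesis $p(y, y) = p(z, z) = a$, these absolute-value bounds immediately give the one-sided estimates $p(x_{n}, y) < a + r + \epsilon/2$ and $p(x_{n}, z) < a + r + \epsilon/2$ for every $n \geq k := \max(k_{1}, k_{2})$.

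Next, I would apply axiom $(p4)$ in the form $p(y, z) \leq p(y, x_{n}) + p(x_{n}, z) - p(x_{n}, x_{n}) \leq p(y, x_{n}) + p(x_{n}, z)$, where the second inequality follows from $p(x_{n}, x_{n}) \geq 0$ by $(p1)$. Substituting the two estimates from the previous step gives $p(y, z) < 2(a + r) + \epsilon$ for any such $n$. Since the left-hand side is independent of $n$ and $\epsilon > 0$ was arbitrary, I conclude that $p(y, z) \leq 2r + 2a$. Taking the supremum over $y, z \in LIM^{r} x_{n}$ then yields $Diam(LIM^{r} x_{n}) \leq 2r + 2a$, as required.

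There is no serious obstacle in the argument. The only place requiring care is converting the two-sided absolute-value bound from the definition into a one-sided upper bound on $p(x_{n}, y)$ and $p(x_{n}, z)$ before invoking the triangle inequality; here the constancy of the self-distances $p(y, y) = p(z, z) = a$ is used essentially. I note in passing that retaining the full $-p(x_{n}, x_{n}) = -a$ correction in $(p4)$ would in fact give the slightly sharper bound $2r + a$, but the coarser estimate $2r + 2a$ asserted in the statement is what falls out of the most natural chain of inequalities and is already enough.
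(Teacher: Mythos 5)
Your proof is correct and takes essentially the same route as the paper: both control $p(y,z)$ by applying $(p4)$ through the point $x_{n}$ together with the rough-convergence estimates for $y$ and $z$, the only cosmetic differences being that you argue directly (letting $\epsilon \to 0$) while the paper argues by contradiction with a tailored $\epsilon \in \bigl(0, \tfrac{p(y,z)}{2}-r-a\bigr)$, and that you discard $-p(x_{n},x_{n})$ via $(p1)$ while the paper uses $p(x_{n},x_{n})=a$. Your closing remark is on target: the paper's own chain actually reaches $2r+2\epsilon+a$, so it too implicitly proves the sharper bound $2r+a$ that you note.
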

\begin{proof}
We shall show that 
$diam(LIM^{r} x_{n})$=$sup$ \{$p(y,z): y,z \in LIM^{r} x_{n}\} \leq 2r+2a$.
If possible, let there exist elements $y,z \in LIM^{r} x_{n}$ such that $p(y,z)>2r+2a$.
So, for an $\epsilon \in (0,\frac{p(y,z)}{2}-r-a)$, there exist $k \in \mathbb{N}$ such that 
      $|p(x_{n},y)-p(y,y)|< r+\epsilon$ and
      $|p(x_{n},z)-p(z,z)|< r+\epsilon$, for all $n \geq k$.
       This implies that
       $\{p(x_{n},y)-p(y,y)\}< r+\epsilon$ and
       $\{p(x_{n},z)-p(z,z)\}<r+\epsilon$ for all $n \geq k$.
       Now for $n \geq k$, we can write
 \begin{equation*}
       \begin{split}
p(y,z) &\leq p(y,x_{n})+p(x_{n},z)-p(x_{n},x_{n})\\
       &=\{p(x_{n},y)-p(y,y)\}+\{p(x_{n},z)-p(z,z)\}-p(x_{n},x_{n})+p(y,y)+p(z,z)\\
       &< 2(r+\epsilon)-a+a+a\\
       &=2r+2\epsilon+a\\
       &<2r+ p(y,z)-2r-2a+a\\
       &=p(y,z)-a , \ \text{which is a contradiction}.\\   
       \end{split}
   \end{equation*}    
Hence there can not have elements $y,z \in LIM^{r} x_{n}$ such that $p(y,z)> 2r+2a$ holds.
Hence the result follows.

\end{proof}
\noindent \textbf{Remark :} We have taken an additional condition that $p(x,x)=a$ $\forall x$ in $X$ in the theorem 3.1. Truly there exists such kind of partial metric space. The partial metric spaces given in example 3.1 is such one.\\
We introduce the following definition of one sided convergence in a partial metric space.
\begin{defi}
A sequence $\{x_{n}\}$ is said to be convergent to $x$ from right (or left) in a partial metric space $(X,p)$ if for any $\epsilon>0$, $\exists k \in \mathbb{N}$ such that 
$| p(x_{n},x)-p(x,x) |< \epsilon $ holds for all $n \geq k$, when $p(x_{n}, x) \geq p(x,x)$ (or respectively when $ p(x_{n},x)\leq p(x,x)$).
\end{defi}
\begin{thm}
A sequence $\{x_{n}\}$ is convergent in $(X,p)$ iff it is both convergent from right and left.
\end{thm}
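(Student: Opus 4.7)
The statement is essentially a tautological splitting of the convergence condition into two parts, one for each side of $p(x,x)$, so the proof plan is quite short.

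For the forward implication, I would simply observe that if $\{x_n\}$ converges to $x$ in $(X,p)$, then for every $\epsilon>0$ there exists $k\in\mathbb{N}$ such that $|p(x_n,x)-p(x,x)|<\epsilon$ for \emph{all} $n\geq k$, with no restriction on the sign of $p(x_n,x)-p(x,x)$. In particular, the same inequality holds for those $n\geq k$ satisfying $p(x_n,x)\geq p(x,x)$ and for those satisfying $p(x_n,x)\leq p(x,x)$, which is exactly right and left convergence respectively.

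For the converse, suppose $\{x_n\}$ converges to $x$ both from the right and from the left. Fix an arbitrary $\epsilon>0$. By right convergence there is $k_1\in\mathbb{N}$ such that $|p(x_n,x)-p(x,x)|<\epsilon$ for all $n\geq k_1$ with $p(x_n,x)\geq p(x,x)$; by left convergence there is $k_2\in\mathbb{N}$ with the analogous property for indices satisfying $p(x_n,x)\leq p(x,x)$. Set $k=\max\{k_1,k_2\}$. For any $n\geq k$ the real number $p(x_n,x)$ must lie either in the range $[p(x,x),\infty)$ or in $(-\infty,p(x,x)]$ (these cases overlap when equality holds, which is harmless), so in either case the appropriate hypothesis applies and gives $|p(x_n,x)-p(x,x)|<\epsilon$. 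Hence $\{x_n\}$ converges to $x$ in the sense of Definition~2.6.

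There is no real obstacle here; the only thing to be careful about is handling the boundary case $p(x_n,x)=p(x,x)$ correctly, which is automatic because the two defining conditions involve non-strict inequalities and therefore cover the whole real line between them. The argument uses nothing about the partial metric structure beyond the fact that $p(x_n,x)$ and $p(x,x)$ are real numbers comparable by the usual order.
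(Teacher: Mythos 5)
Your proposal is correct and follows essentially the same route as the paper: the forward direction is immediate, and the converse combines the two one-sided conditions case-by-case on the sign of $p(x_n,x)-p(x,x)$. Your only (minor, welcome) refinement is taking $k=\max\{k_1,k_2\}$ explicitly, where the paper somewhat loosely uses a single $k$ for both hypotheses.
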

\begin{proof}
A sequence $\{x_{n}\}$ is convergent to $x$ in $(X,p)$, then $\{x_{n}\}$ is convergent to $x$ from right and left together holds trivialy.\\
Conversely, let $\{x_{n}\}$ be convergent to $x$ from right and from left together.
Then for any $\epsilon>0$ $\exists, k \in \mathbb{N}$ such that 
$| p(x_{n},x)-p(x,x) |< \epsilon $ holds for all $n \geq k$,
when $p(x_{n}, x) \geq p(x,x)$ 
and  $| p(x_{n},x)-p(x,x) |< \epsilon $ holds for all $n \geq k$,
when $p(x,x) \geq p(x_{n},x)$.
So, in any case $| p(x_{n},x)-p(x,x) |< \epsilon $ holds for all $n \geq k$.
This implies that $\{x_{n}\}$ is convergent to $x$.
\end{proof}
\begin{defi}
A sequence $\{x_{n}\}$ is said to be $r$-convergent to $x$ from right (or left) in a partial metric space $(X,p)$ if for any $\epsilon>0$, $\exists k \in \mathbb{N}$ such that 
$| p(x_{n},x)-p(x,x) |<r+ \epsilon $ holds for all $n \geq k$, when $p(x_{n}, x) \geq p(x,x)$ (or respectively when $ p(x_{n},x)\leq p(x,x)$).
\end{defi}
If $\{x_{n}\}$ is $r$-convergent to $x$ from right (or left) then $x$ is said to be right (or left) $r$-limit point of $\{x_{n}\}$.
\begin{thm}
A sequence $\{x_{n}\}$ is $r$-convergent of roughness degree $r$ in $(X,p)$ iff it is both $r$-convergent from right and left of same roughness degree $r$.
\end{thm}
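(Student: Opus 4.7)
The plan is to follow the same two-step template used for Theorem 3.2, since the only change is replacing $\epsilon$ by $r+\epsilon$ in the governing inequality. The statement is essentially a formal consequence of the trichotomy of the reals: for each $n$, either $p(x_n,x)\geq p(x,x)$ or $p(x_n,x)\leq p(x,x)$, so the full $r$-convergence condition splits naturally into the two one-sided conditions and reassembles from them.

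For the forward direction, I would suppose $\{x_n\}$ is $r$-convergent to $x$ and fix $\epsilon>0$. By Definition 3.1 there is $k\in\mathbb{N}$ with $|p(x_n,x)-p(x,x)|<r+\epsilon$ for all $n\geq k$. This inequality is unconditional on the sign of $p(x_n,x)-p(x,x)$, so in particular it holds whenever $p(x_n,x)\geq p(x,x)$ and whenever $p(x_n,x)\leq p(x,x)$. Thus $\{x_n\}$ is simultaneously right and left $r$-convergent to $x$ of the same roughness degree.

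For the converse, assume $\{x_n\}$ is both right and left $r$-convergent to $x$ with the same roughness degree $r$. Given $\epsilon>0$, pick $k_1$ witnessing right $r$-convergence and $k_2$ witnessing left $r$-convergence, and set $k=\max\{k_1,k_2\}$. Fix any $n\geq k$. If $p(x_n,x)\geq p(x,x)$, then the right condition yields $|p(x_n,x)-p(x,x)|<r+\epsilon$; if instead $p(x_n,x)\leq p(x,x)$, then the left condition yields the same inequality. Since one of these cases always occurs, $|p(x_n,x)-p(x,x)|<r+\epsilon$ holds for all $n\geq k$, proving $r$-convergence.

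There is no substantive obstacle here — the argument is a direct transcription of the proof of Theorem 3.2 with $\epsilon$ replaced by $r+\epsilon$. The only minor point to state cleanly is the use of the max of the two indices together with the dichotomy on the sign of $p(x_n,x)-p(x,x)$, which is what lets the two one-sided bounds combine into a single bound valid for all $n\geq k$.
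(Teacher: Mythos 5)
Your proposal is correct and matches the paper's intended argument: the paper omits the proof of this theorem, stating only that it is similar to the proof of Theorem 3.2, and your transcription of that proof with $\epsilon$ replaced by $r+\epsilon$ (splitting on the sign of $p(x_n,x)-p(x,x)$ and taking $k=\max\{k_1,k_2\}$ for the converse) is exactly that argument, stated slightly more carefully than the paper's Theorem 3.2 proof, which tacitly uses a single index $k$ for both one-sided conditions.
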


Proof is similar to the proof of theorem 3.2.

We denote set of all right (or left) $r$-limit point of $\{x_{n}\}$ by $R-LIM^{r}x_{n}$ (or $L-LIM^{r}x_{n}$). We note that if $\{x_{n}\}$ is $r$-convergent then  $R-LIM^{r}x_{n} \subset LIM^{r}x_{n}$ and $L-LIM^{r}x_{n} \subset LIM^{r}x_{n}$.\\
\indent The following property is a modification of the result given in \cite{RMROUGH}.\\ 
\begin{thm}
If a sequence $\{x_{n}\}$ converges to $x$ in a partial metric space $(X,p)$, then $\{ y \in \overline{B^{p}_{r}}(x):p(x,x)=p(y,y)\} \subseteq R- LIM^{r}x_{n}$. 
\end{thm}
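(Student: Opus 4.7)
The plan is to fix an arbitrary $y$ in the set $S := \{y \in \overline{B^p_r}(x) : p(x,x) = p(y,y)\}$ and verify directly from the definition that $y \in R\text{-}LIM^r x_n$. Unpacking what $y \in S$ means gives us two hypotheses we can exploit: the self-distance identity $p(y,y) = p(x,x)$ and the ball membership condition $p(x,y) \le p(x,x) + r$.

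Next, I would use the hypothesis that $\{x_n\}$ converges to $x$ in $(X,p)$: for any $\epsilon > 0$ there is $k \in \mathbb{N}$ with $|p(x_n,x) - p(x,x)| < \epsilon$ for all $n \ge k$; in particular $p(x_n,x) - p(x,x) < \epsilon$ for $n \ge k$. To connect $p(x_n,y)$ with $p(x_n,x)$ and $p(x,y)$, I would apply the partial-metric triangle inequality $(p4)$ in the form
\[
p(x_n, y) \;\le\; p(x_n, x) + p(x, y) - p(x,x).
\]
Subtracting $p(y,y)$ from both sides and using $p(y,y)=p(x,x)$ yields
\[
p(x_n, y) - p(y,y) \;\le\; \bigl[p(x_n,x) - p(x,x)\bigr] + \bigl[p(x,y) - p(x,x)\bigr] \;<\; \epsilon + r,
\]
for all $n \ge k$.

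Finally, I would close the argument by invoking the one-sided definition (Definition~3.3): in the regime $p(x_n, y) \ge p(y,y)$ the quantity $|p(x_n,y) - p(y,y)|$ equals $p(x_n,y) - p(y,y)$, which by the above is strictly less than $r + \epsilon$. Since $\epsilon > 0$ was arbitrary, this shows $x_n \xrightarrow{r} y$ from the right, i.e.\ $y \in R\text{-}LIM^r x_n$, establishing the inclusion. The argument is essentially a clean application of $(p4)$ together with the two structural properties of elements of $S$; the only subtle point is remembering that right $r$-convergence only requires the estimate under the one-sided hypothesis $p(x_n,y) \ge p(y,y)$, so no control is needed in the opposite regime.
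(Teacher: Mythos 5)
Your proposal is correct and follows essentially the same route as the paper's proof: both apply the triangle inequality $(p4)$ in the form $p(x_n,y) \le p(x_n,x) + p(x,y) - p(x,x)$, combine it with the ball condition $p(x,y) \le p(x,x)+r$ and the self-distance identity $p(y,y)=p(x,x)$, and conclude via the one-sided regime $p(x_n,y) \ge p(y,y)$. Your algebraic bookkeeping (grouping the bound as $[p(x_n,x)-p(x,x)] + [p(x,y)-p(x,x)]$) is, if anything, a touch cleaner than the paper's, but the argument is the same.
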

\begin{proof}
First suppose that the sequence $\{x_{n}\}$ converges to $x$ in a partial metric space $(X,p)$ and let $\epsilon >0$.
So, for $\epsilon >0$ there exist $k \in \mathbb{N}$ such that 
      $|p(x_{n},x)-p(x,x)|< \epsilon$, for all $ n \geq k$........(1).
 Let $y \in \overline {B^{p}_{r}}(x)$ such that $p(x,x)=p(y,y)$. Then $p(x,y) \leq p(x,x)+ r$ such that $p(y,y)=p(x,x)$........(2).
Now, by condition (p4) of the definition of partial metric space, we can write 
\begin{equation*}
    \begin{split}
        p(x_{n},y) &\leq p(x_{n},x)+p(x,y)-p(x,x)\\
             &\leq \{p(x_{n},x)-p(x,x)\}+p(x,y)\\
             &< \epsilon + \{p(x,x)+r\} \ \text{by} \ (1) \ \text{and} \ (2) \\
             &= p(x,x)+(r+\epsilon) \ ,\text{for} \ n\geq k.
    \end{split}
\end{equation*}
 Therefore, 
\begin{equation*}
    \begin{split}
p(x_{n},y)-p(y,y) & <p(x,x)-p(y,y)+(r+\epsilon)\\
                  & <(r+\epsilon) \ \text{, since} \ p(x,x)= p(y,y).
    \end{split}
\end{equation*}
So, when $p(x_{n},y) \geq p(y,y)$,  $|p(x_{n},y)-p(y,y)|=p(x_{n},y)-p(y,y) < (r+\epsilon)$, for all $n\geq k$.
Hence $y \in R-LIM^{r}x_{n}$.
\end{proof}
\noindent \textbf{Remark :} 
An open ball in a partial metric space $(X,p)$, by definition, contains the element $x$. The set of all open balls of a partial metric space $(X,p)$ forms a  basis for a topology on $X$, which is denoted by $\tau(p)$. 
It was shown in \cite{BMW} that $\tau(p)$ is $T_{0}$, but may not be Hausdorff, even may not be $T_{1}$. However the following is also true.

\begin{thm}
Every partial metric space $(X,p)$ is first countable.
\end{thm}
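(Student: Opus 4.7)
The plan is to mimic the standard first-countability proof from ordinary metric spaces, using the fact that the open balls $B^p_r(x)$ form a basis for $\tau(p)$. For each $x \in X$, I propose the countable collection
\[
\mathcal{B}_x = \{ B^p_{1/n}(x) : n \in \mathbb{N}\}
\]
as a local base at $x$.

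First I would verify that every member of $\mathcal{B}_x$ is an open neighborhood of $x$. Each $B^p_{1/n}(x)$ is a basis element of $\tau(p)$, so it is open, and $x \in B^p_{1/n}(x)$ because $p(x,x) < p(x,x) + 1/n$. Next I would show the basis property: given any open set $U \subseteq X$ with $x \in U$, by the definition of an open set in $(X,p)$ there exists $r > 0$ with $B^p_r(x) \subseteq U$. Choose $n \in \mathbb{N}$ with $1/n < r$, and note that for any $y \in B^p_{1/n}(x)$ we have $p(x,y) < p(x,x) + 1/n < p(x,x) + r$, so $y \in B^p_r(x)$. Therefore $B^p_{1/n}(x) \subseteq B^p_r(x) \subseteq U$, which is exactly the local-base condition.

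Since $\mathcal{B}_x$ is countable and serves as a neighborhood basis at $x$, and $x$ was arbitrary, every point of $X$ has a countable neighborhood basis, so $(X,p)$ is first countable.

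There is no real obstacle here: the self-distance $p(x,x)$ need not vanish, but it appears symmetrically on both sides of the inequality defining $B^p_r(x)$, so the standard shrinking-radii argument goes through verbatim. The only point to be a little careful about is checking $x \in B^p_{1/n}(x)$, which could fail in a framework where balls were defined by $p(x,y) < r$ instead of $p(x,y) < p(x,x) + r$; under the definition used in this paper it is immediate.
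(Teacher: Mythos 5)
Your proof is correct, and its computational core---picking $n$ with $1/n<r$ and chaining $x\in B^{p}_{1/n}(x)\subseteq B^{p}_{r}(x)\subseteq U$---is the same nested-ball argument the paper uses. The difference is in the logical packaging, and here your version is actually the sound one. You fix $x$ and exhibit the countable family $\mathcal{B}_x=\{B^{p}_{1/n}(x): n\in\mathbb{N}\}$ as a local base at $x$, which is precisely what first countability requires. The paper instead works with the single global family $u=\{B^{p}_{1/n}(x): x\in X,\ n\in\mathbb{N}\}$, proves it is a basis for $\tau(p)$, and then asserts ``clearly, $u$ is countable''---but that assertion is false whenever $X$ is uncountable (take an uncountable set with the discrete metric, which is a partial metric), and countability of a global basis would be second countability, a strictly stronger property that partial metric spaces need not have. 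Fortunately, the paper's own displayed chain $x\in B^{p}_{1/n}(x)\subset B^{p}_{\epsilon}(x)\subset A$ uses only balls centered at $x$, so it already contains the local-base argument; your write-up extracts exactly that content and discards the erroneous countability claim. One small point worth keeping explicit, as you did: $x\in B^{p}_{1/n}(x)$ holds because $p(x,x)<p(x,x)+1/n$, which is where the paper's convention $B^{p}_{r}(x)=\{y: p(x,y)<p(x,x)+r\}$ (rather than $p(x,y)<r$) is essential, since self-distances need not vanish.
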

\begin{proof}
Consider $u=\{B^{p}_{\frac{1}{n}}(x): x \in X, n \in \mathbb{N}\}$.
Obviously $u \subset v$, where $v=\{B^{p}_\epsilon (x): x \in X ,\epsilon >0\}$, the basis of the topology $\tau(p)$.
We show that $u$ is a basis for  $\tau(p)$.
Let $A \in  \tau(p)$ and $x \in A $ be any element.
Since $v$ is a basis for $\tau(p)$, $\exists$ $\epsilon >0$ such that $x \in B^{p}_\epsilon \subset A$.
Choose $n \in \mathbb{N}$ so that $ \frac{1}{n} < \epsilon$.
Then 
$B^{p}_{\frac{1}{n}}(x) \subset  B^{p}_\epsilon (x)$ .
Clearly, $ x \in B^{p}_{\frac{1}{n}}(x)$.
So, $ x \in B^{p}_{\frac{1}{n}}(x) \subset  B^{p}_\epsilon (x)\subset A $.
So, $u$ form  a basis for  $\tau(p)$.
Clearly, $u$ is countable.
Thus $(X,p)$ is first countable.
\end{proof}

\begin{thm}
Let $\{x_{n}\}$ be a $r$-convergent sequence from right in $(X,p)$ and $\{y_{n}\}$ be a convergent sequence in $R-LIM^{r} x_{n}$ converging to $y$. Then $y$ must belongs to $R-LIM^{r} x_{n}$.
\end{thm}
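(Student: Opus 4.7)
The plan is to show that for every $\epsilon>0$ there exists $N\in\mathbb{N}$ such that $|p(x_{m},y)-p(y,y)|<r+\epsilon$ holds for all $m\geq N$ satisfying $p(x_{m},y)\geq p(y,y)$. The strategy is the standard ``triangle inequality plus $\epsilon/2$'' argument, adapted to the partial-metric setting: route from $x_{m}$ to $y$ via an intermediate $y_{n_{0}}$ which lies in $R-LIM^{r}x_{n}$ and is close to $y$.

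The central inequality comes from axiom (p4) with $z=y_{n_{0}}$:
\[
p(x_{m},y)\leq p(x_{m},y_{n_{0}})+p(y_{n_{0}},y)-p(y_{n_{0}},y_{n_{0}}),
\]
which rearranges to
\[
p(x_{m},y)-p(y,y)\leq\bigl[p(x_{m},y_{n_{0}})-p(y_{n_{0}},y_{n_{0}})\bigr]+\bigl[p(y_{n_{0}},y)-p(y,y)\bigr].
\]
Given $\epsilon>0$, I would first use $y_{n}\to y$, i.e.\ $\lim_{n}p(y_{n},y)=p(y,y)$, to fix an index $n_{0}$ with $|p(y_{n_{0}},y)-p(y,y)|<\epsilon/2$. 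Then, since $y_{n_{0}}\in R-LIM^{r}x_{n}$, I would choose $N\in\mathbb{N}$ so that for every $m\geq N$ with $p(x_{m},y_{n_{0}})\geq p(y_{n_{0}},y_{n_{0}})$ one has $p(x_{m},y_{n_{0}})-p(y_{n_{0}},y_{n_{0}})<r+\epsilon/2$. In the complementary case $p(x_{m},y_{n_{0}})<p(y_{n_{0}},y_{n_{0}})$ the signed difference is already negative, so the same strict bound $r+\epsilon/2$ is trivially satisfied. Substituting both into the display, for every $m\geq N$ with $p(x_{m},y)\geq p(y,y)$,
\[
|p(x_{m},y)-p(y,y)|=p(x_{m},y)-p(y,y)<(r+\epsilon/2)+\epsilon/2=r+\epsilon,
\]
which gives $y\in R-LIM^{r}x_{n}$.

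The main subtlety I expect is the bookkeeping of the one-sidedness: the definition of right $r$-convergence only controls $p(x_{m},y_{n_{0}})-p(y_{n_{0}},y_{n_{0}})$ when this quantity is non-negative, so the other case has to be disposed of separately (which is free, since the difference is then negative). The $-p(y_{n_{0}},y_{n_{0}})$ subtraction in (p4) is also essential: it is precisely what allows the self-distances to telescope cleanly when the right-deviation at $y_{n_{0}}$ is combined with $-p(y,y)$ on the left, instead of leaving an unpaired self-distance term as would happen with a naive metric-style triangle inequality.
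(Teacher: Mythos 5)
Your proof is correct and takes essentially the same route as the paper's: you select an intermediate point $y_{n_{0}}\in R-LIM^{r}x_{n}$ with $|p(y_{n_{0}},y)-p(y,y)|<\epsilon/2$, apply axiom (p4) through $y_{n_{0}}$ so the self-distance $p(y_{n_{0}},y_{n_{0}})$ telescopes, and combine the two half-$\epsilon$ bounds exactly as in Theorem 3.6. Your explicit disposal of the case $p(x_{m},y_{n_{0}})<p(y_{n_{0}},y_{n_{0}})$ (where the signed difference is negative, so the bound is free) is a detail the paper passes over silently when it asserts its inequality (1) unconditionally, but the argument is the same.
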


\begin{proof}
Let $\epsilon>0$ pre assigned.
Since $\{y_{n}\}$ converges to $y$, for $\epsilon>0$ there exists $k_{1} \in \mathbb{N}$ such that
$|p(y_{n},y)-p(y,y)|< \frac{\epsilon}{2}$ for all $n \geq k_{1}$.
Now, let us choose $y_{m} \in R-LIM^{r} x_{n}$ with $m > k_{1}$.
Then for same $\epsilon>0$, there exists $k_{2} \in \mathbb{N}$ such that 
$|p(x_{n},y_{m})-p(y_{m},y_{m})|< r+ \frac{\epsilon}{2}$ for all $n \geq k_{2}$, when $p(x_{n},y_{m}) \geq p(y_{m},y_{m})$. So, $p(x_{n},y_{m})-p(y_{m},y_{m})< r+ \frac{\epsilon}{2}$ for all $n \geq k_{2}$......(1).
Since $m > k_{1}$, $|p(y_{m},y)-p(y,y)|< \frac{\epsilon}{2}$.......(2).
Also, for all $n \in \mathbb{N}$, we have 
$p(x_{n},y) \leq p(x_{n},y_{m})+p(y_{m},y)-p(y_{m},y_{m}).$\\
So, for all $n \geq k_{2}$ we have 
\begin{equation*}
    \begin{split}
    p(x_{n},y)-p(y,y) & \leq p(x_{n},y_{m})+p(y_{m},y)-p(y_{m},y_{m})-p(y,y) \\
    & \leq {p(x_{n},y_{m})-p(y_{m},y_{m})} + |p(y_{m},y)-p(y,y)| \\
    & <(r+ \frac{\epsilon}{2}) + \frac{\epsilon}{2}, \ \text{by} \ (1) \ \text{and} \ (2) \\
    & =r+\epsilon.
    \end{split}
\end{equation*}
So, when $p(x_{n},y) \geq p(y,y)$, $|p(x_{n},y)-p(y,y)|={p(x_{n},y)-p(y,y)}<r+\epsilon$ $\forall n \geq k_{2}$.\\
Hence $y \in R-LIM^{r} x_{n}$. 
\end{proof}
\begin{cor}
Let $\{x_{n}\}$ be a $r$-convergent sequence from right in a partial metric space $(X,p)$. Then $R-LIM^{r}x_{n}$ is a closed set for any degree of roughness $r \geq 0$.
\end{cor}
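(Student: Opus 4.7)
The plan is to reduce the corollary to the sequential closure statement of the preceding theorem by invoking first countability of the partial metric topology. Closedness in a topological space is in general captured by the condition ``contains all its limit points,'' and in a first countable space every such limit point is the limit of a sequence from the set. Since Theorem 3.5 asserts $(X,p)$ is first countable, this gives the right setup.

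Concretely, I would argue as follows. Let $y$ be any point in the closure of $R\text{-}LIM^{r} x_{n}$ with respect to the topology $\tau(p)$. By first countability (Theorem 3.5), there exists a sequence $\{y_{n}\}$ lying in $R\text{-}LIM^{r} x_{n}$ with $y_{n} \to y$ in $(X,p)$. Since $\{x_{n}\}$ is $r$-convergent from the right by hypothesis, Theorem 3.6 (the sequential-closedness statement just proved) applies to this $\{y_{n}\}$ and forces $y \in R\text{-}LIM^{r} x_{n}$. Thus $R\text{-}LIM^{r} x_{n}$ contains every point of its closure, hence is closed.

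The main obstacle I anticipate is subtle rather than computational: in $(X,p)$ the topology $\tau(p)$ need not be $T_{1}$ or Hausdorff (as noted in the remark after Theorem 3.3), so one must check that ``closed set'' is really being used in the standard topological sense and that the equivalence between being closed and being sequentially closed genuinely follows from first countability in this generality. This equivalence does hold in any first countable topological space without separation axioms, so the argument goes through, but it is the only nontrivial point that deserves an explicit sentence in the write-up. Apart from that, the proof is a one-line application of the two previous results and needs no further computation.
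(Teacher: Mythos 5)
Your proposal is correct and is exactly the paper's argument: the paper's proof is the one-line reduction ``since $(X,p)$ is first countable by Theorem 3.5, the result follows directly from Theorem 3.6,'' and you have simply spelled out the closed-equals-sequentially-closed step that this reduction implicitly uses. Your added observation that this equivalence needs no separation axioms (and, one may note, that by axiom (p1) we have $p(y_{n},y)\geq p(y,y)$, so convergence in $\tau(p)$ coincides with the paper's Definition of convergence) is a worthwhile clarification, not a deviation.
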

\begin{proof}
Since the partial metric space $(X,p)$ is first countable, by theorem 3.5, the results follows directly from theorem 3.6.
\end{proof}
\begin{thm}
Let $\{ x_{n}\}$ be a $r$-convergent sequence in $(X, p)$ and $\{y_{n}\}$ be a convergent sequence in $LIM^{r} x_{n}$ converging to $y$. Then $y$ must belongs to $R-LIM^{r} x_{n}$. 
\end{thm}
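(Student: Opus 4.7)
The plan is to reduce the statement to Theorem 3.6 by two short observations. First, the two-sided hypothesis $y_n \in LIM^{r} x_{n}$ is automatically stronger than its right-sided counterpart: for any $\epsilon>0$ the inequality $|p(x_k,y_n)-p(y_n,y_n)|<r+\epsilon$ (valid for all large $k$) gives in particular $p(x_k,y_n)-p(y_n,y_n)<r+\epsilon$ whenever $p(x_k,y_n)\ge p(y_n,y_n)$, which is exactly the defining condition of $y_n\in R-LIM^{r}x_{n}$ from Definition 3.3. Hence $\{y_n\}$ is a convergent sequence lying in $R-LIM^{r}x_{n}$. Second, $\{x_n\}$ being $r$-convergent forces, by Theorem 3.3, that it is $r$-convergent from the right. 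The hypotheses of Theorem 3.6 are therefore all verified, and it delivers $y\in R-LIM^{r}x_{n}$.

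For a self-contained proof one can simply transcribe the argument of Theorem 3.6 with the stronger assumption $y_m\in LIM^{r}x_{n}$ in place. Fix $\epsilon>0$; convergence of $\{y_n\}$ furnishes $k_1$ with $|p(y_n,y)-p(y,y)|<\epsilon/2$ for all $n\ge k_1$, and we choose any $y_m$ with $m>k_1$. Since $y_m\in LIM^{r}x_{n}$, there is $k_2$ such that $|p(x_n,y_m)-p(y_m,y_m)|<r+\epsilon/2$ for $n\ge k_2$; in particular $p(x_n,y_m)-p(y_m,y_m)<r+\epsilon/2$ on this range. The partial-metric triangle inequality (p4) then yields
\[
p(x_n,y)-p(y,y)\le\{p(x_n,y_m)-p(y_m,y_m)\}+\{p(y_m,y)-p(y,y)\}<r+\epsilon
\]
for $n\ge k_2$, so whenever $p(x_n,y)\ge p(y,y)$ we have $|p(x_n,y)-p(y,y)|<r+\epsilon$, establishing $y\in R-LIM^{r}x_{n}$.

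I do not anticipate a genuine obstacle. The only point worth flagging is that the proof of Theorem 3.6 never actually uses the lower-sided consequence $p(y_m,y_m)-p(x_n,y_m)<r+\epsilon/2$; only the upper-sided bound enters the chain of inequalities. This explains why the symmetric hypothesis $y_m\in LIM^{r}x_{n}$ is exploited here in precisely the same way as the one-sided hypothesis of Theorem 3.6, and why the stronger assumption still yields only the one-sided conclusion $y\in R-LIM^{r}x_{n}$ rather than $y\in LIM^{r}x_{n}$.
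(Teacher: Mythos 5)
Your proposal is correct and takes essentially the same route as the paper: the paper omits the proof of this theorem, saying only that it is parallel to that of Theorem 3.6, and your second paragraph is precisely that parallel transcription (with the two-sided hypothesis $y_m \in LIM^{r}x_{n}$ supplying the upper-sided bound $p(x_n,y_m)-p(y_m,y_m)<r+\epsilon/2$ that drives the (p4) estimate). Your preliminary reduction via the inclusion $LIM^{r}x_{n}\subseteq R\text{-}LIM^{r}x_{n}$ together with Theorem 3.3 is also sound, and your closing observation correctly explains why only the one-sided conclusion is obtained.
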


The proof is parallal to the proof of theorem 3.6 and so is omitted.

\indent A sequence $\{ x_{n}\}$ in a partial metric space $(X, p)$ is said to be bounded (cf.\cite{AKB}) if and only if there exists a $M(>0)\in \mathbb{R}$ such that $sup$ $\{p( x_{n}, x_{m})\}< M$.
The following theorem is a generalization of the classical properly of a sequence that a convergent sequence must be bounded. 
\begin{thm}
Every $r$-convergent sequence in a partial metric space $(X,p)$ is bounded. 
\end{thm}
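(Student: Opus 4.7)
The plan is to imitate the classical metric-space argument, but paying attention to the fact that in a partial metric space the triangle inequality (p4) carries an extra self-distance term, and that the definition of $r$-convergence involves $|p(x_n,x)-p(x,x)|$ rather than $p(x_n,x)$ itself.

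First I would fix an $x\in LIM^{r}x_{n}$, which is available because $\{x_n\}$ is assumed to be $r$-convergent. Taking $\epsilon=1$ in the definition yields a $k\in\mathbb{N}$ such that $|p(x_{n},x)-p(x,x)|<r+1$ for all $n\ge k$, and in particular
\[
p(x_{n},x)<p(x,x)+r+1 \qquad \text{for all } n\ge k.
\]
This is the key tail estimate.

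Next I would apply axiom (p4) to any pair $x_{n},x_{m}$ with $n,m\ge k$:
\[
p(x_{n},x_{m})\le p(x_{n},x)+p(x,x_{m})-p(x,x)<2\bigl(p(x,x)+r+1\bigr)-p(x,x)=p(x,x)+2r+2.
\]
Thus the tail of the double sequence $\{p(x_n,x_m)\}_{n,m\ge k}$ is uniformly bounded.

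To finish, I would handle the finitely many remaining cases. The set $\{p(x_{i},x_{j}):1\le i,j\le k\}$ is finite and hence bounded above by some $M_{1}$. For the mixed case $n<k\le m$, (p4) gives $p(x_{n},x_{m})\le p(x_{n},x)+p(x,x_{m})-p(x,x)$, and the right-hand side is bounded because $p(x_n,x)$ takes only finitely many values while $p(x,x_m)$ is controlled by the tail estimate above; call this bound $M_{2}$. Choosing $M>\max\{M_{1},M_{2},p(x,x)+2r+2\}$ gives $\sup\{p(x_{n},x_{m})\}<M$, proving boundedness. I do not anticipate a real obstacle here; the only care needed is remembering that the $-p(z,z)$ term in (p4) works in our favour (it only makes the upper bound smaller), so no additional hypothesis on self-distances is required.
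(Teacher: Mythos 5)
Your proof is correct and follows essentially the same strategy as the paper's: extract a tail estimate $p(x_{n},x)<p(x,x)+r+1$ from the definition of $r$-convergence, apply the partial-metric triangle inequality (p4) to pairs of tail terms, and bound the finitely many remaining pairs separately. The only difference is cosmetic --- you triangulate directly through the rough limit $x$ with $\epsilon=1$ fixed, while the paper routes through the sequence term $x_{k}$ in three cases, so your constant $p(x,x)+2r+2$ is simply a tidier version of the paper's bounds of the form $4(r+\epsilon)+2|a|+L$; your observation that the $-p(z,z)$ term in (p4) only helps is also correct.
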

\begin{proof}
Let $\{x_{n}\}$ be a sequence $r$-converges to $x$ in a partial metric space $(X,p)$ and let $p(x,x)=a$.
We will show that $\{x_{n}\}$ is bounded in $X$.
Let $\epsilon > 0$. So for $\epsilon > 0$, there exists a natural number $k$ such that 
$|p(x_{n},x)-p(x,x)|< r+ \epsilon$, for all $ n \geq k $.
Consider $L = max_{1\leq i,j \leq k} \{ p(x_{i}, x_{j}) \}$.\\  
Now, we consider three cases \\
\indent \textbf{Case(i):}
For $i \leq k$ and $j \geq k$, then 
$p(x_{j},x_{k}) \leq p(x_{j},x)+ p(x,x_{k})- p(x,x)$
$=p(x_{j},x)-p(x,x)+p(x_{k},x)-p(x,x)+p(x,x)$.
So, $|p(x_{j},x_{k})|\leq |p(x_{j},x)- p(x,x)|+ |p(x_{k},x)-p(x,x)|+|p(x,x)|$
$<2(r+\epsilon)+|a|$.......(1).
Now, $p(x_{i},x_{j}) \leq p(x_{i},x_{k})+ p(x_{k},x_{j})- p(x_{k},x_{k})$. 
So, $|p(x_{i},x_{j})| \leq |p(x_{i},x_{k})|+ |p(x_{k},x_{j})|+ |p(x_{k},x_{k})| $
 $< L+ 2(r+\epsilon)+ |a|+ L$
 $= 2(r+\epsilon+ L)+|a|$.\\
\indent \textbf{Case(ii):}
For $i \geq k$ and $j \leq k$, then interchanging the role of $i$ and $j$ in case(i), it follows that 
$p(x_{j},x_{j}) \leq  2(r+\epsilon+ L)+|a|$.\\
\indent \textbf{Case(iii):}
Now we consider the case for $i \geq k$ and $j \geq k$.
Since  $i \geq k$, so by (1)
$|p(x_{i},x_{k})| \leq  2(r+\epsilon)+|a|$
and since $j \geq k$,
$|p(x_{j},x_{k})| \leq  2(r+\epsilon)+|a|$.
Now, $p(x_{i},x_{j}) \leq p(x_{i},x_{k})+ p(x_{k},x_{j})- p(x_{k},x_{k})$. 
So, $|p(x_{i},x_{j})| \leq |p(x_{i},x_{k})|+ |p(x_{k},x_{j})|+ |p(x_{k},x_{k})|$
$< 4(r+\epsilon)+2|a|+L$.
If $M > max \{L,  2(r+\epsilon+L)+|a|, 2(r+\epsilon+L)+|a|, 4(r+\epsilon)+2|a|+L\}$,
then $p(x_{i},x_{j})< M $ for all $i,j \in \mathbb{N}$.
Therefore, $\{x_{n}\}$ is bounded in $X$ and hence the result follows.\\
\end{proof}
\indent We know that a bounded sequence in a metric space may not be convergent. But it has been studied in \cite{RMROUGH} that a bounded sequence in a cone metric space is rough convergent for some degree of roughness. This result in a partial metric space $(X,p)$, has been modified in the following way.
\begin {thm}
Let $\{x_n\}$ be a bounded sequence in a partial metric space $(X,p)$. Then $\{x_n\}$ is always $r$-convergent from right of roughness degree $r=4M$ and $r$-convergent from left of roughness degree $r=2M$.
\end{thm}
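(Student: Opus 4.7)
The plan is to produce a specific element $x\in X$ that plays the role of both a right and a left $r$-limit of $\{x_n\}$ with the prescribed degrees. The natural candidate is a fixed term of the sequence, say $x:=x_{n_0}$ for some $n_0\in\mathbb{N}$, because boundedness immediately yields $p(x_n,x_{n_0})<M$ for every $n$ and (on taking $n=n_0$) $p(x_{n_0},x_{n_0})<M$, while property (p1) supplies the lower bound $p(x_{n_0},x_{n_0})\geq 0$. These facts alone suffice to estimate $|p(x_n,x_{n_0})-p(x_{n_0},x_{n_0})|$ on each side.

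For $r$-convergence from the right with $r=4M$, I would fix an arbitrary $\epsilon>0$ and restrict attention to those $n$ for which $p(x_n,x_{n_0})\geq p(x_{n_0},x_{n_0})$; on this range the absolute value simplifies to $p(x_n,x_{n_0})-p(x_{n_0},x_{n_0})$. A single use of (p4) routed through an auxiliary sequence term $x_m$ gives
\[
p(x_n,x_{n_0})\leq p(x_n,x_m)+p(x_m,x_{n_0})-p(x_m,x_m)<2M,
\]
so $p(x_n,x_{n_0})-p(x_{n_0},x_{n_0})<2M<4M+\epsilon$. Because this bound is uniform in $n$, any $k\in\mathbb{N}$ (for instance $k=1$) witnesses right $r$-convergence.

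The left case is symmetric: for $n$ with $p(x_n,x_{n_0})\leq p(x_{n_0},x_{n_0})$ one has
\[
|p(x_n,x_{n_0})-p(x_{n_0},x_{n_0})|=p(x_{n_0},x_{n_0})-p(x_n,x_{n_0})\leq p(x_{n_0},x_{n_0})<M<2M+\epsilon,
\]
so the same $k$ works. The principal obstacle is simply spotting the correct candidate $x$; once we commit to a term of the sequence, boundedness together with (p1) and (p4) does all the work and no careful choice of the threshold $k$ is needed. If instead one prefers to take $x$ as a more general point of $X$, a triangle-inequality chain through a sequence term is precisely what produces the $4M$ and $2M$ scales appearing in the statement.
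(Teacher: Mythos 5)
Your proposal is correct and takes essentially the same route as the paper: both fix a term of the sequence as the candidate rough limit and bound $|p(x_n,x_k)-p(x_k,x_k)|$ uniformly in $n$ using boundedness together with (p1) and (p4), so that any threshold $k$ witnesses the convergence. In fact your estimates ($2M$ on the right, $M$ on the left, with the detour through $x_m$ even redundant since $p(x_n,x_{n_0})<M$ directly) are tighter than the paper's chain, which inflates to exactly $4M$ and $2M$ by bounding each term by $M$ separately --- showing the stated roughness degrees are sufficient but not sharp.
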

\begin{proof}
Let $\{ x_{n}\}$ be a bounded sequence in a partial  metric space $(X, p)$. So there exists a $M(>0)\in \mathbb{R}$ such that $sup$ $\{p(x_{n}, x_{m})\} < M$ for all $n ,m \in \mathbb{N}$......(1).
Let $k$ be a fixed natural number. Then for all $n \in \mathbb{N}$, we can write
$p(x_{n},x_{k}) \leq p(x_{n},x_{m})+ p(x_{m},x_{k})- p(x_{m},x_{m})$.
So, $p(x_{n},x_{k})-p(x_{k},x_{k}) \leq p(x_{n},x_{m})+ p(x_{m},x_{k})- p(x_{m},x_{m})-p(x_{k},x_{k})$.
Therefore, whenever $p(x_{n},x_{k}) \geq p(x_{k},x_{k})$, we have
\begin{equation*}
    \begin{split}
    |p(x_{n},x_{k})-p(x_{k},x_{k})| & =p(x_{n},x_{k})-p(x_{k},x_{k})\\
    & \leq p(x_{n},x_{m})+p(x_{m},x_{k})-p(x_{m},x_{m})-p(x_{k},x_{k}) \\
    & \leq |p(x_{n},x_{m})+p(x_{m},x_{k})-p(x_{m},x_{m})-p(x_{k},x_{k})| \\
    & \leq |p(x_{n},x_{m})|+ |p(x_{m},x_{k})|+ |p(x_{m},x_{m})|+|p(x_{k},x_{k})|  \\
    & < M+M+M+M \ \text{by} \ (1) \\
    & = 4M \\
    & < 4M+\epsilon \\
    & =r+ \epsilon \ \forall n, \ \text{where} \ r=4M. 
    \end{split}
\end{equation*}
Hence $\{ x_{n}\}$ rough converges to $x_{k}$ from right for every $k\in \mathbb{N}$ for degree of roughness $r$.
Again,

		$$p(x_{k},x_{k})  \leq p(x_{k},x_{n})+p(x_{n},x_{k})-p(x_{n},x_{n})$$

This implies that
	
	$$p(x_{k},x_{k})-p(x_{n},x_{k})  \leq p(x_{n},x_{k})-p(x_{n},x_{n})$$

Therefore, whenever $p(x_{n},x_{k})  \leq p(x_{k},x_{k})$,

\begin{equation*}
	\begin{split}
		|p(x_{n},x_{k})-p(x_{k},x_{k})| & =p(x_{k},x_{k})-p(x_{n},x_{k})\\
		& \leq p(x_{n},x_{k}) - p(x_{n},x_{n}) \\
		& \leq |p(x_{n},x_{k})-p(x_{n},x_{n})| \\
		& \leq |p(x_{n},x_{k})|+|p(x_{n},x_{n})| \\
		& < M+M \\
		& < 2M + \varepsilon
	\end{split}
\end{equation*}
Hence $\{x_n\}$ is $r$-convergent from left of roughness degree $r=2M$.

\end{proof}
The following result is a similar kind of result as discussed by Phu\cite{PHU} in a normed linear space.
\begin{thm}
Let $\{ x_{{n}_{k}}\}$ be a subsequence of $\{ x_{n}\}$ then   $LIM^{r} x_{n} \subseteq  LIM^{r} x_{{n}_{k}}$.
\end{thm}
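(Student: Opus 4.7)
The plan is a direct unpacking of the definition of $r$-convergence, exploiting the fact that a subsequence inherits every tail-property of the original sequence. I would start by fixing an arbitrary $x \in LIM^{r}x_{n}$ and an arbitrary $\epsilon > 0$, and apply the definition of $r$-convergence to produce a threshold $k \in \mathbb{N}$ such that
\[
|p(x_{n},x) - p(x,x)| < r + \epsilon \quad \text{for all } n \geq k.
\]

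Next, I would use the standing fact that for a subsequence $\{x_{n_j}\}$ of $\{x_n\}$, the index map $j \mapsto n_j$ is strictly increasing, so in particular $n_j \geq j$ for every $j$. Thus choosing $k' = k$ (or any integer with $n_{k'} \geq k$), I get that for all $j \geq k'$ the index $n_j$ satisfies $n_j \geq k$, and hence the inequality above applies with $n$ replaced by $n_j$, giving
\[
|p(x_{n_j},x) - p(x,x)| < r + \epsilon \quad \text{for all } j \geq k'.
\]
Since $\epsilon$ was arbitrary, this is exactly the statement that $x \in LIM^{r}x_{n_j}$, establishing the desired inclusion.

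There is no genuine obstacle here: the proof is purely formal and uses neither the triangle axiom $(p4)$ nor the self-distance peculiarity of partial metrics. The only thing to be careful about is the notational distinction between the subsequence indexing variable and the threshold $k$ coming from the definition of $LIM^{r}x_{n}$, which I would handle by renaming (as $k'$ above) to avoid clashing with the $k$ in the subscript $n_{k}$.
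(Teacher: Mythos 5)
Your proposal is correct and is essentially the paper's own argument: both proofs fix $x \in LIM^{r}x_{n}$ and $\epsilon > 0$, obtain the tail threshold from the definition of $r$-convergence, and observe that the subsequence indices eventually exceed it (you via $n_j \geq j$, the paper via choosing $q$ with $n_q > m$ and monotonicity), which are trivially equivalent. No gap.
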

\begin{proof}
Let $x \in  LIM^{r} x_{n}$ and $\epsilon (>0)$ be arbitary.
Then there exists a number $m \in \mathbb{N}$ such that 
$ |p(x_{n},x)-p(x,x)|< r+ \epsilon $, for all $ n \geq m $.
Let $n_{q}>m$ for some $q \in \mathbb{N}$. Then $n_{k}>m$ for all $k \geq q$.
Therefore $|p(x_{{n}_{k}},x)-p(x,x)|< r+ \epsilon$, for all $k \geq q$.
So $x \in LIM^{r} x_{{n}_{k}}$.
Hence the result follows.
\end{proof}
\begin {thm}
Let $\{x_{n}\}$ and $\{y_{n}\}$ be two sequences in a partial metric space $(X, p)$ such for every $\epsilon (>0)$ there exists $k \in \mathbb{N}$ such that $p(x_{n}, y_{n})\leq \epsilon$, when $n \geq k$.
 If $\{x_{n}\}$ is $r$-convergent to $x$ and for any $\epsilon >0$, $\exists$ $m \in \mathbb{N}$ such that $p(x_{n}, x_{n})\leq \epsilon$ when $n \geq m$, then $\{y_{n}\}$ is $r$-convergent to $x$ from right. 
 Conversely, if $\{y_{n}\}$ is $r$-convergent to $x$ and for any $\epsilon >0$, $\exists$ $p \in \mathbb{N}$ such that $p(y_{n}, y_{n})\leq \epsilon$, when $n \geq p$, then $\{x_{n}\}$ is $r$-convergent to $x$ from right.  
\end{thm}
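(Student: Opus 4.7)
The plan is to derive both directions from a single application of the triangularity axiom (p4) with $x_n$ (resp.\ $y_n$) as the intermediate vertex, and then invoke the closeness hypothesis $p(x_n,y_n)\to 0$ to absorb the extra term.

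For the forward direction, fix $\epsilon>0$. By (p4),
\[
p(y_n,x)\leq p(y_n,x_n)+p(x_n,x)-p(x_n,x_n).
\]
Since $p(x_n,x_n)\geq 0$ by (p1), the negative term may be discarded; subtracting $p(x,x)$ from both sides gives
\[
p(y_n,x)-p(x,x)\leq p(x_n,y_n)+\bigl[p(x_n,x)-p(x,x)\bigr].
\]
I then apply the closeness hypothesis with tolerance $\epsilon/2$ and the $r$-convergence of $\{x_n\}$ to $x$ with tolerance $\epsilon/2$ (the latter yielding $p(x_n,x)-p(x,x)\leq |p(x_n,x)-p(x,x)|<r+\epsilon/2$), obtaining $p(y_n,x)-p(x,x)<r+\epsilon$ for all sufficiently large $n$. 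At indices where $p(y_n,x)\geq p(x,x)$, the left side equals $|p(y_n,x)-p(x,x)|$, which is precisely right $r$-convergence of $\{y_n\}$ to $x$ as required by the definition.

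The converse is handled by the symmetric computation. Using $p(y_n,x_n)=p(x_n,y_n)$ from (p3) and (p4) with $y_n$ as the intermediate point,
\[
p(x_n,x)\leq p(x_n,y_n)+p(y_n,x)-p(y_n,y_n),
\]
the same estimate yields $p(x_n,x)-p(x,x)<r+\epsilon$ eventually, hence right $r$-convergence of $\{x_n\}$ to $x$.

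I do not anticipate any real difficulty. Two small points to manage are splitting $\epsilon$ into two halves so that the two small quantities combine into $r+\epsilon$ rather than $r+2\epsilon$, and restricting attention to those indices where the signed inequality coincides with the absolute-value bound, which is exactly the ``from right'' clause in the definition of one-sided rough convergence. Interestingly, the self-distance hypotheses on $p(x_n,x_n)$ and $p(y_n,y_n)$ are used only through their non-negativity from (p1), since these terms enter the triangular estimate with a minus sign and may simply be dropped.
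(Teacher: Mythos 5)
Your proof is correct, and its skeleton matches the paper's: both apply the triangularity axiom (p4) with $x_n$ (respectively $y_n$) as the intermediate point, subtract $p(x,x)$, and then restrict to the indices where $p(y_n,x)\geq p(x,x)$ so that the signed estimate coincides with the absolute-value bound demanded by the definition of right $r$-convergence. The divergence lies in how the self-distance term is handled, and it is substantive. The paper keeps $-p(x_n,x_n)$ in the estimate, passes to the absolute value of the entire right-hand side, bounds it by $|p(x_n,y_n)|+|p(x_n,x)-p(x,x)|+|p(x_n,x_n)|$, and therefore needs the hypothesis $p(x_n,x_n)\leq \epsilon/3$ (hence its three-way split of $\epsilon$ and $k=\max\{k_1,k_2,k_3\}$). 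You instead discard $-p(x_n,x_n)\leq 0$ using nonnegativity (p1) \emph{before} estimating, split $\epsilon$ into halves, and never touch the self-distance hypothesis. Your closing observation is thus accurate and amounts to a genuine sharpening: the assumptions that the self-distances $p(x_n,x_n)$ (respectively $p(y_n,y_n)$ in the converse) eventually fall below any $\epsilon$ are redundant for the theorem as stated --- the paper's proof uses them only because it takes absolute values of a term that enters with a favorable sign. The converse direction goes through symmetrically in both treatments, exactly as you indicate, with the roles of $x_n$ and $y_n$ exchanged via symmetry (p3).
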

\begin{proof}
Let $\{x_{n}\}$ be  $r$-convergent to $x$ and let $\epsilon (>0)$ be preassigned.
So, for $\epsilon (>0)$ there exists $k_{1} \in \mathbb{N}$ such that 
$|p(x_{n},x) - p(x,x)|< r+ \frac{\epsilon}{3}$ for all $n \geq k_{1}$.
Again by the conditions, there exists $k_{2},k_{3} \in \mathbb{N}$ such that $p(x_{n}, y_{n})\leq \frac{\epsilon}{3}$ for all $n \geq k_{2}$ and $p(x_{n}, x_{n})\leq \frac{\epsilon}{3}$ for all $n \geq k_{3}$.
Let $k=max \{k_{1},k_{2},k_{3}\}$.Then for all $n \geq k$, we have
$|p(x_{n},x) - p(x,x)|< r+ \frac{\epsilon}{3}$,
$p(x_{n}, y_{n})\leq \frac{\epsilon}{3}$ and
$p(x_{n}, x_{n})\leq \frac{\epsilon}{3}$.......(1).
Again, for all $n \in \mathbb{N}$, we can write 
$p(y_{n},x) \leq p(y_{n},x_{n})+p(x_{n},x)-p(x_{n},x_{n})$.
Now, $p(y_{n},x) - p(x,x) \leq p(y_{n},x_{n}) + p(x_{n},x) - p(x_{n},x_{n}) - p(x,x)$.\\
So, whenever $p(y_{n},x) \geq p(x,x)$, then
\begin{equation*}
    \begin{split}
    |p(y_{n},x) - p(x,x)|& =p(y_{n},x) - p(x,x)\\
    & \leq p(y_{n},x_{n}) + p(x_{n},x) - p(x_{n},x_{n}) - p(x,x)\\
    & \leq |p(y_{n},x_{n}) + p(x_{n},x) - p(x_{n},x_{n}) - p(x,x)|\\
    &\leq |p(x_{n},y_{n})| + |p(x_{n},x) - p(x,x)| + |p(x_{n},x_{n})|\\
    & < \frac{\epsilon}{3} + (r+ \frac{\epsilon}{3})+\frac{\epsilon}{3}, \ \text{by} \ (1)\\
    &  = r + \epsilon \ \text{for all} \ n \geq k.
    \end{split}
\end{equation*}
Therefore, $\{y_{n}\}$ is $r$-convergent to $x$ from right.\\
Converse part is similar.
\end{proof}
\begin{thm}
Let $\{x_{n}\}$ and $\{y_{n}\}$ be two sequences in $(X, p)$ such that for every $\epsilon >0$, $\exists$ $k \in \mathbb{N}$ such that $p(x_{n}, y_{n})\leq \epsilon$, when $n \geq k$.
If $\{x_{n}\}$ is $r$-convergent to $x$ and if $\exists$ a positive number $c$ such that $ p(x_{n}, x_{n}) \leq c$ $\forall n$ (i.e. self-distance of the sequence is less or equal to $c$) then  $\{y_{n}\}$ is $(r + c) $-convergent to $x$ from right. 
Conversely, if $\{y_{n}\}$ is $r$-convergent to $x$ and if $\exists$ a positive number $d$ such that $p(y_{n}, y_{n})\leq$ d $\forall n$, then $\{x_{n}\}$ is $(r + d)$-convergent to $x$ from right.  
\end{thm}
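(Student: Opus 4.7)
The plan is to model the argument directly on the proof of Theorem 3.10. The only difference is that the self-distance $p(x_n,x_n)$ is no longer forced to be eventually smaller than any prescribed $\epsilon$, but is merely bounded above by the constant $c$; this constant will be absorbed once and for all into the roughness degree, replacing $r$ by $r+c$. The entire proof rests on the triangularity axiom (p4), and exactly as in Theorem 3.10 the resulting estimate will control only the positive part $p(y_n,x)-p(x,x)$, which is why the conclusion is inevitably one-sided (``from right'').

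For the direct implication, given $\epsilon>0$ I would choose $k_1,k_2\in\mathbb{N}$ so that $|p(x_n,x)-p(x,x)|<r+\tfrac{\epsilon}{2}$ for every $n\ge k_1$ (from $r$-convergence of $\{x_n\}$) and $p(x_n,y_n)\le\tfrac{\epsilon}{2}$ for every $n\ge k_2$ (from the asymptotic closeness hypothesis). Setting $k=\max\{k_1,k_2\}$ and applying (p4),
\[
  p(y_n,x)\le p(y_n,x_n)+p(x_n,x)-p(x_n,x_n),
\]
so that, in the case $p(y_n,x)\ge p(x,x)$, subtracting $p(x,x)$ from both sides and then using the elementary triangle inequality for real numbers gives
\[
  |p(y_n,x)-p(x,x)|\le |p(x_n,y_n)|+|p(x_n,x)-p(x,x)|+|p(x_n,x_n)|<\tfrac{\epsilon}{2}+\bigl(r+\tfrac{\epsilon}{2}\bigr)+c=(r+c)+\epsilon,
\]
where the last term uses the standing bound $p(x_n,x_n)\le c$. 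This is exactly $(r+c)$-convergence of $\{y_n\}$ to $x$ from right.

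For the converse the setup is completely symmetric: $\{y_n\}$ is $r$-convergent to $x$, $p(y_n,y_n)\le d$ for all $n$, and $p(x_n,y_n)$ can still be made arbitrarily small by the standing hypothesis. Interchanging the roles of $\{x_n\}$ and $\{y_n\}$, applying (p4) in the form $p(x_n,x)\le p(x_n,y_n)+p(y_n,x)-p(y_n,y_n)$, and running the identical three-term absolute-value bound with $d$ in place of $c$ yields $(r+d)$-convergence of $\{x_n\}$ to $x$ from right. I do not expect any real obstacle in this proof; the only points that need careful handling are the $\epsilon/2$ splitting of the given tolerance across the two approximating hypotheses and the conceptual fact that the self-distance bound $c$ (respectively $d$) is treated as a fixed additive constant in the roughness degree rather than being made to shrink with $\epsilon$ as in Theorem 3.10.
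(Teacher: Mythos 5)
Your proof is correct and is exactly the argument the paper intends: the paper omits this proof as ``parallel to the proof of the above theorem,'' and your adaptation of that proof --- the same application of axiom (p4) followed by the three-term absolute-value bound, with the self-distance now estimated by the fixed constant $c$ (respectively $d$) absorbed into the roughness degree and the tolerance split as $\epsilon/2+\epsilon/2$ instead of three $\epsilon/3$'s --- is precisely that parallel argument. (Note only that the model proof you describe is the theorem immediately preceding this one, not the subsequence theorem; your description of its content is otherwise accurate.)
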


The proof is parallel to the proof of the above theorem and so is omitted. 

\begin {thm}
Let $(X, p)$ be a partial metric space. If  $\{x_{n}\}$ and  $\{y_{n}\}$ be two sequences rough convergent to $x$ and $y$ respectively of roughness degree $r$ in $X$, then the real sequence $\{p(x_{n},y_{n})\}$ is rough convergent to $p(x,y)$ from right of roughness degree $2r$. 
\end{thm}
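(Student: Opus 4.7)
The plan is to bound $p(x_n, y_n)$ from above by splitting through the two intermediate points $x$ and $y$ using the partial metric triangle inequality $(p4)$ twice, and then control the resulting error terms via the rough convergence hypotheses on $\{x_n\}$ and $\{y_n\}$.

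Concretely, I would first apply $(p4)$ to write
\[
p(x_n, y_n) \leq p(x_n, x) + p(x, y_n) - p(x, x),
\]
and then apply $(p4)$ again to $p(x, y_n)$ through $y$, obtaining
\[
p(x, y_n) \leq p(x, y) + p(y, y_n) - p(y, y).
\]
Substituting and rearranging produces the key inequality
\[
p(x_n, y_n) - p(x, y) \;\leq\; \bigl[p(x_n, x) - p(x, x)\bigr] + \bigl[p(y_n, y) - p(y, y)\bigr].
\]
Crucially, the correction terms $-p(x,x)$ and $-p(y,y)$ coming from $(p4)$ are exactly what is needed to produce the differences that appear in the definition of rough convergence in a partial metric space.

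Next, for a preassigned $\epsilon > 0$, the hypothesis that $x_n \xrightarrow{r} x$ yields $k_1 \in \mathbb{N}$ with $|p(x_n, x) - p(x, x)| < r + \epsilon/2$ for all $n \geq k_1$, and similarly $y_n \xrightarrow{r} y$ yields $k_2 \in \mathbb{N}$ with $|p(y_n, y) - p(y, y)| < r + \epsilon/2$ for all $n \geq k_2$. Setting $k = \max\{k_1, k_2\}$ and using the trivial fact that each bracketed real number is bounded above by its absolute value, the displayed inequality gives
\[
p(x_n, y_n) - p(x, y) < 2r + \epsilon \quad \text{for all } n \geq k.
\]

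Finally, whenever $p(x_n, y_n) \geq p(x, y)$, the left-hand side is non-negative and coincides with $|p(x_n, y_n) - p(x, y)|$, so the same bound reads $|p(x_n, y_n) - p(x, y)| < 2r + \epsilon$. By Definition 3.3 (applied to the real sequence $\{p(x_n, y_n)\}$ and limit $p(x, y)$), this is precisely what it means for $\{p(x_n, y_n)\}$ to be rough convergent to $p(x, y)$ from the right with roughness degree $2r$. I do not anticipate a serious obstacle; the only delicate point is bookkeeping the correction terms produced by the modified triangle inequality $(p4)$, and observing that one cannot in general control the other side (i.e.\ when $p(x_n, y_n) < p(x, y)$), which is why the conclusion is stated only as right-convergence.
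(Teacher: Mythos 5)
Your proposal is correct and follows essentially the same route as the paper: the same double application of $(p4)$ through $x$ and then $y$, the same key inequality $p(x_n,y_n)-p(x,y)\leq \bigl[p(x_n,x)-p(x,x)\bigr]+\bigl[p(y_n,y)-p(y,y)\bigr]$, and the same $\epsilon/2$ split with $k=\max\{k_1,k_2\}$. Your closing remark on why only right-convergence can be concluded matches the paper's final step exactly.
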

\begin{proof}
 Let $\{x_{n}\}$ be $r$-convergent to $x$ and $\{y_{n}\}$ be $r$-convergent to $y$.
 Then for any preassigned $\epsilon (>0)$ there exist $ k_{1} , k_{2} \in \mathbb{N}$ such that 
 $|p(x_{n},x)-p(x,x)|< r+ \frac{\epsilon}{2}$ for all $n \geq k_{1}$ and
 $|p(y_{n},y) - p(y,y)|< r+ \frac{\epsilon}{2}$ for all $n \geq k_{2}$.
 Let $k=max \{k_{1},k_{2}\}$. Then for all $n \geq k$, we have
 $|p(x_{n},x) - p(x,x)|< r+ \frac{\epsilon}{2}$ 
 and $|p(y_{n},y) - p(y,y)|< r+ \frac{\epsilon}{2}$......(1).
 Again, for all $n \in \mathbb{N}$ we can write \\
 $ p(x_{n},y_{n}) \leq  p(x_{n},x) + p(x,y) +  p(y,y_{n}) - p(x,x) - p(y,y)$.
 Now,
 \begin{equation*}
     \begin{split}
    p(x_{n},y_{n}) - p(x,y) & \leq  p(x_{n},x) - p(x,x) + p(y_{n},y) - p(y,y) \\  
    & < (r + \frac{\epsilon}{2}) + (r + \frac{\epsilon}{2}) \ \text{by} \ (1) \\
    & = 2r + \epsilon \ \text{for all} \ n \geq k.
     \end{split}
 \end{equation*}
So, whenever $p(x_{n},y_{n}) \geq p(x,y)$,
$| p(x_{n},y_{n}) - p(x,y)|< 2r + \epsilon$ for all $n \geq k$.
Hence the result.
\end{proof}
\vspace{0.2in}
\indent Let $\{x_{n}\}$ be a sequence in a partial metric space $(X, p)$. Then a point $c \in X$ is said to be a cluster point of $\{x_{n}\}$ if for every $\epsilon (>0)$ and every natural number $k$, there exists a natural number $k_{1}$ such that $k_{1}>k$ with  $|p(x_{{k}_{1}}, c) - p(c,c)|< \epsilon$. The idea of a closed ball has been discussed previously.\\
\indent The following result has been discussed in a cone metric space\cite{RMROUGH} here we have verified whether the same holds in a partial metric space. Indeed it gives a range of rough limit sets in terms of cluster points.
\begin {thm}
Let $(X, p)$ be partial metric space and $a$ be a real constant such that $p(x,x)=a$, $\forall x \in X$. Let $\{x_{n}\}$ be a sequence in $(X, p)$. If $c$ is a cluster point of $\{x_{n}\}$, then $LIM^{r}x_{n}  \subset \overline{B^{p}_{r}}(c)$ for some $r >0$.    
\end{thm}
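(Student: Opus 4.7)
The plan is to fix an arbitrary $y \in LIM^{r}x_{n}$ and show $p(c,y) \le p(c,c) + r = a + r$, which by definition puts $y$ in $\overline{B^{p}_{r}}(c)$. The two hypotheses we have to combine are: (i) $y$ is an $r$-limit, giving a tail estimate $|p(x_n,y) - p(y,y)| < r + \epsilon$ valid from some index $k_0$ onward; and (ii) $c$ is a cluster point, giving \emph{some} index arbitrarily far out at which $|p(x_{k_1}, c) - p(c,c)| < \epsilon$. The key point is that we can force these two indices to coincide: fix $\epsilon > 0$, first choose $k_0$ from the definition of $r$-convergence, and then use the cluster-point property to produce $k_1 \geq k_0$ at which both estimates are simultaneously available.

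Next, I would apply the partial metric triangle inequality (p4) at the point $x_{k_1}$:
\begin{equation*}
p(c,y) \;\le\; p(c, x_{k_1}) + p(x_{k_1}, y) - p(x_{k_1}, x_{k_1}).
\end{equation*}
Using $p(x_{k_1}, x_{k_1}) = a$, the cluster-point bound $p(x_{k_1}, c) < a + \epsilon$, and the rough convergence bound $p(x_{k_1}, y) < a + r + \epsilon$, this gives
\begin{equation*}
p(c,y) \;<\; (a+\epsilon) + (a + r + \epsilon) - a \;=\; a + r + 2\epsilon.
\end{equation*}
Since $\epsilon > 0$ was arbitrary, letting $\epsilon \downarrow 0$ yields $p(c,y) \le a + r = p(c,c) + r$, which is precisely the defining condition for $y \in \overline{B^{p}_{r}}(c)$.

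The only subtlety I anticipate is making sure that after dropping absolute value bars, the one-sided inequalities on $p(x_{k_1}, y)$ and $p(x_{k_1}, c)$ I actually use point the right way; but since the bounds we need are upper bounds on $p(x_{k_1}, y)$ and $p(x_{k_1}, c)$, they follow immediately from the absolute-value inequalities. Also, the hypothesis $p(x,x) = a$ for all $x \in X$ is essential in two places, once to simplify the self-distance $p(x_{k_1}, x_{k_1})$ in (p4), and once to rewrite $p(y,y)$ and $p(c,c)$ uniformly as $a$; without it the same computation would yield a ball radius depending on $y$ rather than a single set containing all $r$-limits. I would state the result for any fixed $r \ge 0$ (matching the notation in $LIM^{r}x_n$), noting the trivial case $r = 0$ where the conclusion gives $LIM^{0}x_n \subset \overline{B^{p}_{0}}(c)$.
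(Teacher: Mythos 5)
Your proof is correct and is essentially the paper's argument, just run directly instead of by contradiction: the paper assumes $p(c,y)>a+r$, sets $\epsilon$ to half the excess, proves $B_{r+\epsilon}(y)\cap B_{\epsilon}(c)=\emptyset$ via exactly the (p4) computation you write down, and then contradicts disjointness by placing $x_{k_1}$ (with $k_1>k_0$, the same simultaneous-index device you use) in both balls. Your direct version, letting $\epsilon\downarrow 0$ to conclude $p(c,y)\leq a+r$, is the contrapositive of the same estimate and is arguably a cleaner presentation, with no substantive difference in content.
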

\begin{proof}
Let $y \in LIM^{r}x_{n}$ but $y \notin \overline{B^{p}_{r}}(c)=\{y \in X: p(c,y) \leq p(c,c) + r\} =\{y \in X: p(c,y) \leq a + r\}$.
So, $a + r < p(c,y)$.
Let $\epsilon^{'}= p(c,y) - a -r$ and so $p(c,y)= \epsilon^{'} + a + r$, where $\epsilon^{'} > 0$.
Let $\epsilon =\frac{\epsilon^{'}}{2}$ and so we can write 
$p(c,y)= 2\epsilon + a + r$.
Then $B_{r+\epsilon}(y) \cap B_{\epsilon}(c) = \phi$.
For, otherwise, if $ q \in B_{r+\epsilon}(y) \cap B_{\epsilon}(c)$,
then $p(q,y) < p(y,y) + r + \epsilon = a + r + \epsilon$  and $p(q,c) < p(c,c) + \epsilon = a + \epsilon$.
Again, $p(c,y) \leq p(c,q) + p(q,y) - p(q,q) < \{a+\epsilon\} + \{a+r+\epsilon\} -a = a+r+2\epsilon = p(c,y)$, which is a contradiction.
Therefore, $B_{r+\epsilon}(y) \cap B_{\epsilon}(c) = \phi$.
But since $y \in LIM^{r}x_{n}$, for $\epsilon > 0$ there exists a $k_{0} \in \mathbb{N}$ such that $|p(x_{n},y)-p(y,y)| < r+\epsilon$ for all $n \geq k_{0}$.
Again, since $c$ is a cluster point of $\{x_{n}\}$, for $\epsilon > 0$ and for $k_{0} \in \mathbb{N}$, there exists a $k_{1} \in \mathbb{N}$ with $k_{1}>k_{0}$ such that $|p(x_{k_{1}},c)-p(c,c)| < \epsilon$.
So, $p(x_{k_{1}},c)-p(c,c) < \epsilon$. This implies that $p(x_{k_{1}},c) < p(c,c) + \epsilon$. Hence $ x_{k_{1}} \in B_{\epsilon}(c)$.
Also, $|p(x_{k_{1}},y)-p(y,y)| < r+\epsilon$.
So, $p(x_{k_{1}},y)-p(y,y) < r+\epsilon$. This implies that $p(x_{k_{1}},y) < p(y,y) + r + \epsilon$. Hence $ x_{k_{1}} \in B_{r+\epsilon}(y)$.
So, $ x_{k_{1}} \in B_{r+\epsilon}(y) \cap B_{\epsilon}(c)$, which is a contradiction.
Hence  $y \in \overline{B^{p}_{r}}(c)$.
\end{proof}
\noindent \textbf{Acknowledgement:}
The second author is thankful to The University of Burdwan for the grant of Junior Research Fellowship (State Funded) during the preparation of this paper.

\end{document}